\newcommand{\R}{\mathbb{R}}
\newcommand{\bprf}{\begin{proof}[Proof]}
\newcommand{\eprf}{\end{proof}}
\newtheorem{thm}{Theorem}
\newtheorem{lem}[thm]{Lemma}
\theoremstyle{definition} 
\newtheorem{rem}[thm]{Remark}
\begin{document}
\title{On the number of colored Birch and \\Tverberg partitions}
\author{{\Large Stephan Hell}
} 

\maketitle
\begin{abstract}
In 2009, Blagojevi\'c, Matschke \& Ziegler established the first tight colored Tverberg
theorem, but no lower bounds for the number of colored Tverberg partitions. We develop a
colored version of our previous results (2008), and we extend our results from the
uncolored version: Evenness and non-trivial lower bounds for the number of colored Tverberg partitions.
This follows from similar results on the number of colored Birch partitions.
\end{abstract}
\begin{section}{Introduction}\label{sec-intro}
In 1966, Tverberg~\cite{tverberg66:_theor} showed that any $(d+1)(r-1)+1$ points in $d$-dimensional space $\R^d$
can be partitioned into $r$ blocks whose convex hulls have a non-empty intersection. This result is
known as {\it Tverberg's theorem}, and it has several proofs, and many generalizations, 
see Matou\v{s}ek~\cite[Sect.~6.5]{matou03:_using_borsuk_ulam} for details.

The first {\it colored Tverberg theorem} is due to B\'ar\'any \& Larman~\cite{barany92}; see Ziegler~\cite{z11:_col-tvp_ams}
for a recent account of the story. In 2009, Blagojevi\'c, Matschke \&
Ziegler~\cite{bmz10:_col-tp} established an {\it optimal} colored Tverberg theorem. Since then, their results have been reproved
by themselves~\cite{bmz11:_col-tvp},
Matou\v{s}ek, Tancer \& Wagner~\cite{mtw11:_col-geom-ctvp}, 
and Vre\'{c}ica \& \v{Z}ivaljevi\'c~\cite{vrecica-zival-11:chessboard}. 
\begin{thm}[{\cite[Thm~2.2]{bmz10:_col-tp}}]\label{thm-ctv-bmz11}Let $d \geq 1$, $r \geq 2$ prime, $N:=(d+1)(r-1)$, and 
$f:\Delta_N\rightarrow \R^d$ continuous, where the $N+1$ vertices of $\Delta_N$ have $d+2$ different colors, 
and the color classes satisfy $|C_0|=|C_1|=\ldots=|C_d|=r-1$ and $|C_{d+1}|=1$. Then the simplex $\Delta_N$ 
has $r$ disjoint rainbow faces $F_1,F_2,\ldots,F_r$ whose images under $f$ have a non--empty intersection:
\[ \bigcap_{i=1}^r f(F_i) \not = \emptyset.\]
\end{thm}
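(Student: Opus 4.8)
\noindent\textit{Proof sketch.}
The plan is to run the configuration-space/test-map scheme; the one twist forced by the coloring is that the usual $r$-fold deleted join of $\Delta_N$ is replaced by a join of chessboard complexes, and the lone color class $C_{d+1}$ is there precisely to make the resulting dimension count non-trivial.

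\textbf{Step 1 (configuration space).} Write $V=C_0\sqcup\cdots\sqcup C_{d+1}$ for the vertex set of $\Delta_N$. A choice of $r$ pairwise vertex-disjoint rainbow faces $(\sigma_1,\dots,\sigma_r)$ of $\Delta_N$ together with barycentric weights is exactly a point of the subcomplex $X$ of the $r$-fold $2$-wise deleted join $(\Delta_N)^{*r}_{\Delta}$ cut out by the conditions $|\sigma_i\cap C_j|\le 1$ for all $i,j$. Splitting the deleted join as a join over the color classes identifies
\[
X\;\cong\;\Delta_{r-1,r}^{*(d+1)}\,*\,\Delta_{1,r},
\]
where $\Delta_{m,r}$ is the chessboard complex on $[m]\times[r]$ (simplices $=$ partial matchings): one factor $\Delta_{r-1,r}$ for each of $C_0,\dots,C_d$, and the factor $\Delta_{1,r}=[r]$ ($r$ isolated points) recording into which block the unique vertex of $C_{d+1}$ is placed. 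Hence $\dim X=(d+1)(r-1)=N$. The cyclic group $\Z/r$ acts on $X$ by permuting the $r$ blocks; since a non-empty partial matching in $\Delta_{r-1,r}$ occupies at most $r-1$ of the $r$ columns, no non-trivial rotation fixes it, so this action is free.

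\textbf{Step 2 (test map).} Let $W_r=\{x\in\R^r:\sum_i x_i=0\}$ be the standard $(r-1)$-dimensional representation of $\Z/r$, and identify $W_r^{\oplus(d+1)}$ with $(\R^{d+1})^{\oplus r}$ modulo its diagonal. Using $f$, define the $\Z/r$-equivariant map
\[
t\colon X\longrightarrow W_r^{\oplus(d+1)},\qquad t\Bigl(\textstyle\sum_{i=1}^{r}\mu_i x_i\Bigr)=\bigl[\,\bigl(\mu_i\,(f(x_i),1)\bigr)_{i=1}^{r}\,\bigr],
\]
for $x_i\in\Delta_N$ with weights $\mu_i\ge 0$, $\sum_i\mu_i=1$ and pairwise disjoint supports; this is continuous since $f$ is bounded on the compact $\Delta_N$. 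A zero of $t$ means $\mu_1(f(x_1),1)=\cdots=\mu_r(f(x_r),1)$, which forces $\mu_1=\cdots=\mu_r=1/r$ and $f(x_1)=\cdots=f(x_r)$; taking $F_i$ to be the support of $x_i$, a non-empty rainbow face, then yields $\bigcap_{i=1}^r f(F_i)\neq\emptyset$. As $\dim_\R W_r^{\oplus(d+1)}=N$, it now suffices to show that $t$ has a zero, i.e.\ that \emph{there is no $\Z/r$-equivariant map $X\to S\bigl(W_r^{\oplus(d+1)}\bigr)=S^{N-1}$}, the sphere carrying (for $r$ prime) a free $\Z/r$-action of dimension $N-1$.

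\textbf{Step 3 (equivariant topology, the crux).} For $r=2$ one has $X\cong S^N$ and the claimed non-existence is precisely the Borsuk--Ulam theorem. For $r=p$ an odd prime, a $\Z/p$-map $X\to S^{N-1}$ would give $\mathrm{coind}_{\Z/p}(X)\le\mathrm{coind}_{\Z/p}(S^{N-1})=N-1$, so it suffices to prove $\mathrm{coind}_{\Z/p}(X)\ge N$. Since the coindex is superadditive under joins and $\mathrm{coind}_{\Z/p}(\Delta_{1,p})=\mathrm{coind}_{\Z/p}([p])\ge 0$, and $X$ is the join of $d+1$ copies of $\Delta_{p-1,p}$ with $\Delta_{1,p}$, this reduces to the single equivariant statement
\[
\mathrm{coind}_{\Z/p}\bigl(\Delta_{p-1,p}\bigr)\;=\;p-2,
\]
i.e.\ to producing a $\Z/p$-map from $S(W_p)$ (equivalently, from the $(p{-}2)$-skeleton of $E\Z/p$) into the chessboard complex $\Delta_{p-1,p}$. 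Equivalently one may phrase this step as the non-vanishing in degree $N$ of the Fadell--Husseini $\Z/p$-index of $X$, or of the Euler class of the bundle $X\times_{\Z/p}W_p^{\oplus(d+1)}$ over the Borel construction --- which, because $\dim X=N$ while $S^{N-1}$ is $(N{-}2)$-connected, is the only obstruction to the map in question.

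\textbf{The main obstacle.} Steps 1 and 2 are formal and the reduction in Step 3 is standard; the genuine content is the chessboard-complex identity $\mathrm{coind}_{\Z/p}(\Delta_{p-1,p})=p-2$. This is exactly where the primality of $p$ enters essentially: $\Delta_{p-1,p}$ is only about $\tfrac{2}{3}p$-connected, so it is \emph{not} $\Z/p$-equivalent to a sphere $S^{p-2}$ and no connectivity-only (Borsuk--Ulam-type) argument applies; one must exploit the precise $\Z/p$-module structure of the non-vanishing (and in general torsion) homology of $\Delta_{p-1,p}$ --- or construct the equivariant map into it by obstruction theory and verify that all higher obstructions vanish --- to push the argument through. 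I expect this to be the heart of the proof, and the reason the first tight colored Tverberg theorem was obtained only for prime $r$.
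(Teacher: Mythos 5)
First, a framing remark: the paper does not prove this statement at all. Theorem~\ref{thm-ctv-bmz11} is imported verbatim from Blagojevi\'c--Matschke--Ziegler and used as a black box, so there is no in-paper proof to compare against; your proposal has to be judged against the known proofs. On that score, your Steps 1 and 2 are correct and standard: the identification of the configuration space with $\Delta_{r-1,r}^{*(d+1)}*[r]$, the dimension count $\dim X=N$, the freeness of the $\Z/r$-action for $r$ prime, and the test map into $W_r^{\oplus(d+1)}$ are exactly as in the literature, and you correctly isolate the non-existence of a $\Z/r$-equivariant map $X\to S(W_r^{\oplus(d+1)})$ as the whole content.

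The gap is Step 3, and it is not merely an omitted lemma: you reduce to a statement that is strictly stronger than what is needed and, to my knowledge, not available. Non-existence of a $\Z/p$-map $X\to S^{N-1}$ is an \emph{index}-type assertion; it neither implies nor is implied by $\mathrm{coind}_{\Z/p}(X)\ge N$, and in particular it does not factor through $\mathrm{coind}_{\Z/p}(\Delta_{p-1,p})=p-2$. Producing a $\Z/p$-map $S(W_p)\to\Delta_{p-1,p}$ is exactly the step that breaks down in the \v{Z}ivaljevi\'c--Vre\'cica scheme once the color classes have size $r-1$: the complex $\Delta_{p-1,p}$ is only about $\lfloor 2p/3\rfloor-2$ connected, has torsion in its homology below the top dimension, and no such equivariant map is known --- this is precisely why the 1992 colored Tverberg theorem needed color classes of size $2r-1$, where the chessboard complex is shellable. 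Your parenthetical claim that this is ``equivalent'' to the non-vanishing of the Fadell--Husseini index of $X$ in degree $N$ is also not an equivalence: the statements that actually get proved concern the \emph{whole join at once} --- Blagojevi\'c--Matschke--Ziegler evaluate the primary equivariant obstruction directly (it is computed by a signed count of colored Tverberg partitions of an explicit configuration, nonzero mod $p$) or reduce to the topological Tverberg theorem; Vre\'cica--\v{Z}ivaljevi\'c compute the Fadell--Husseini index of the join using the $\Z/p$-module structure of chessboard-complex homology; Matou\v{s}ek--Tancer--Wagner argue geometrically via a degree count. None of these establishes, or needs, the single-factor coindex identity on which your sketch hinges, so the heart of the proof is missing and the proposed route to it would likely fail.
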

Here {\it rainbow} means that every color occurs at most once. In the following, we focus on the case when $f$ is an
{\it affine} map. In this case, one can think of the set $f({\rm vert}(\Delta_N))\subset\R^d$ as $N+1$ colored points
satisfying the above color condition which can be partitioned into $r$ rainbow partition blocks $B_1,B_2,\ldots,B_r$, where
$B_i=f({\rm vert}(F_i))$ for all $i$, such that 
their convex hulls intersect:
\[ \bigcap_{i=1}^r \text{conv} (B_i) \not = \emptyset.\]

Both Tverberg's theorem, and Theorem~\ref{thm-ctv-bmz11} settle the existence of one (!) partition. In the uncolored
case, Sierksma conjectured that there are at least $((r-1)!)^d$ partitions based on a particular point configuration;
see~\cite{matou03:_using_borsuk_ulam}. This conjecture is open for $d\geq 2$.
{\it Lower bounds} for the number of Tverberg partitions have first been obtained by Vu\'ci\'c \& 
\v{Z}ivaljevi\'c~\cite{vz93:_notes_sierk} when $r$ is prime, and by the author~\cite{hell07:_tverb} when $r$ is a prime power. 
Then the first lower bound was shown that holds for $r$ arbitrary in~\cite{hell08:_birch}. Up to now, no non-trivial
lower bounds have been known in the colored case, not even a good conjecture. This is what
we provide here: 
{\it Lower bounds for the number of colored Tverberg partitions that hold for arbitrary $r$.}
We extend our approach from the uncolored case in~\cite{hell08:_birch}: We study colored Birch partitions
in Theorem~\ref{thm-num-cbp} which yields the first non-trivial lower bounds in Theorem~\ref{thm-num-ctv}. 
In Section~\ref{sec-rem}, we discuss minimal point configurations.\\

{\it Observation.}
Assuming that the $(d+1)(r-1)+1$ points are in general position, the partition blocks consist of at most $d+1$ points. 
One possible solution is a single point that lies in the convex hulls of $r-1$ many $(d+1)$-element sets. 
The other extreme case would be 
$d$ partition blocks of exactly (!) $d$ points each, intersecting in a single point, plus $r-d$ many $(d+1)$-element sets
that all contain the intersection point, where $d\leq r$. In all cases, we have at least $r-d$ many $(d+1)$-element 
sets $B_1, B_2,\ldots , B_{r-d}$
which (i) contain a common point in their convex hulls, and which (ii) are rainbow sets in the following way: 
each of them contains each of the colors $0,1,\ldots, d$ exactly once, that is, $|B_i\cap C_j|=1$ for all $0\leq j\leq d$ and 
all $1\leq i \leq r-d-1$. 

This observation leads to the concept of 
colored Birch partitions. For this, let $p\in\R^d$ be a point, and $k\geq 1$ a natural number. Given a set X of 
$k(d+1)$ colored points in $\R^d$ of $d+1$ different colors such that each color class 
$C_0,C_1,\ldots, C_d$ contains exactly $k$ points, we call a partition $B_1,B_2,\ldots,B_k$ a {\it colored 
Birch partition of $X$ to the point $p$} if each block $B_i$ contains 
exactly $d+1$ points, uses every color exactly once, and contains $p$ in its convex hull. Let $\text{cBP}_k(X)$ be
the number of all unordered colored Birch partitions of $X$ to $p$. Here {\it unordered} means 
that two partitions are regarded as the same if one can be obtained from the other by a permutation of the $k$ partition
blocks. The partitions in the
previous paragraph are examples of colored Birch partitions to the single point resp.~the intersection point.
Placing $p$ outside the convex hull of $X$ one gets $\text{cBP}_k(X)=0$. Figure~\ref{fig-bp} shows an 
example for $d=2$, and $k=4$ with $\text{cBP}_k(X)=2$. By definition: $\text{cBP}_k(X)\leq\text{BP}_k(X)$, where $\text{BP}_k(X)$
is the number of uncolored Birch partitions, see~\cite{hell08:_birch} for more information.\\

Let us formulate our main results. For this, a set of points is in {\it general position} of no $k+2$ 
points are on a common $k$-dimensional affine subspace.
\begin{thm}\label{thm-num-cbp}For $d\geq 1$, let $p\in\R^d$ be a point, and $k\geq 1$ a natural number. For any set X of 
$k(d+1)$ colored points in $\R^d$ of $d+1$ different colors such that each color class $C_0,C_1,\ldots, C_d$ 
contains exactly $k$ points, and $X\cup\{p\}$ in general position, 
the number of colored Birch partitions $\text{cBP}_k(X)$ has the following
four properties:
\begin{enumerate}[\rm (i)]
\item\label{it-cb-even}$\text{cBP}_k(X)$ is even for $k\geq d+2$.
\item\label{it-cb-lower-d1}$\text{cBP}_k(X)>0\,\,\Longrightarrow\,\,\text{cBP}_k(X)\geq \lceil \frac{k}{2}\rceil !
\cdot \lfloor \frac{k}{2}\rfloor !$
for $d=1$.
\item\label{it-cb-lower-plane}$\text{cBP}_k(X)>0\,\,\Longrightarrow\,\,\text{cBP}_k(X)\geq 8\cdot 3^{k-6}$
for $d=2$ and $k\geq 6$.
\item\label{it-cb-lower}$\text{cBP}_k(X)>0\,\,\Longrightarrow\,\,\text{cBP}_k(X)\geq 2^{k-d-1}$ for $d \geq 2$ and $k\geq d+2$. 
\end{enumerate}
\end{thm}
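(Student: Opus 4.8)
The plan is to settle $d=1$ by a direct count and to treat $d\geq 2$ by an induction on $k$ after projecting away the point $p$.

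\emph{The case $d=1$.} Here $p$ splits the line into two rays. Let $a$ be the number of points of $C_0$ strictly to the left of $p$ and $b$ the number of points of $C_1$ to the left. A rainbow block $\{x,y\}$ with $x\in C_0$, $y\in C_1$ contains $p$ iff $x$ and $y$ lie on opposite sides of $p$; hence a colored Birch partition exists iff the $a$ left points of $C_0$ can be matched with right points of $C_1$ and symmetrically, i.e.\ iff $a=k-b$, and in that case $\text{cBP}_k(X)=a!\,(k-a)!$, the product of the number of matchings on the two sides of $p$. Minimizing $a!\,(k-a)!$ over $0\leq a\leq k$ gives $\lceil k/2\rceil!\,\lfloor k/2\rfloor!$, which is \ref{it-cb-lower-d1}; and for $k\geq d+2=3$ one of $a,k-a$ is $\geq 2$, so $\text{cBP}_k(X)$ is even, giving \ref{it-cb-even} for $d=1$.

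\emph{Reduction and peeling induction for $d\geq 2$.} Apply the central projection $x\mapsto\hat x:=(x-p)/\|x-p\|$ to $S^{d-1}$; afterwards a rainbow block $\{v,t_1,\dots,t_d\}$ contains $p$ in its interior iff $\hat v$ lies in the open simplicial cone $-\operatorname{cone}_{>0}(\hat t_1,\dots,\hat t_d)$, so a colored Birch partition is the same thing as a system of $k$ disjoint rainbow simplices on $S^{d-1}$ enclosing the origin. I would prove \ref{it-cb-lower} and \ref{it-cb-lower-plane} by induction on $k$: the base case $k=d+2$ of \ref{it-cb-lower} is immediate from \ref{it-cb-even} together with $\text{cBP}_k(X)>0$, which force $\text{cBP}_k(X)\geq 2=2^{1}$, and the case $k=6$ of \ref{it-cb-lower-plane} is just the case $k=6$ of \ref{it-cb-lower} ($8=2^{3}$). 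For the step pick a vertex $v$ of $\operatorname{conv}(X)$, say $v\in C_0$, and let $\mathcal T_v$ be the set of rainbow $d$-sets $T$ (colors $1,\dots,d$) such that $\{v\}\cup T$ occurs as a block of some colored Birch partition. Since the block of $v$ in any partition is some $\{v\}\cup T$ with $T\in\mathcal T_v$, and deleting that block yields a colored Birch partition of $X\setminus(\{v\}\cup T)$ (so that $\text{cBP}_{k-1}(X\setminus(\{v\}\cup T))>0$), one has the exact identity
\[ \text{cBP}_k(X)=\sum_{T\in\mathcal T_v}\text{cBP}_{k-1}\bigl(X\setminus(\{v\}\cup T)\bigr)\ \geq\ |\mathcal T_v|\cdot 2^{\,k-d-2}\]
by the induction hypothesis on each summand. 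Hence \ref{it-cb-lower} follows once one shows $|\mathcal T_v|\geq 2$ for a suitable hull vertex $v$; for $d=2$ one needs instead $|\mathcal T_v|\geq 3$ for a suitable vertex of the cyclic point set on $S^1$, which should be read off from the arc containing $-\hat v$ and the colors present immediately to either side of it, after which $3\cdot(8\cdot 3^{k-7})=8\cdot 3^{k-6}$ closes the step.

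\emph{Evenness for $d\geq 2$, and the main obstacle.} For \ref{it-cb-even} I would construct a fixed-point-free involution on the colored Birch partitions, modeled on $d=1$: fix two points $x,x'$ of one color class; in every partition they lie in distinct blocks $\{x\}\cup T$ and $\{x'\}\cup T'$, and exchanging $x$ with $x'$ yields another valid partition as long as $\hat x$ and $\hat x'$ lie on the same side of every hyperplane through the origin spanned by $d-1$ of the $\hat t_i$ resp.\ $\hat t_i'$, i.e.\ $x$ and $x'$ are ``geometrically interchangeable''. The delicate points — which are the real content of the theorem, everything above being the projection, the peeling identity and arithmetic — are precisely (a) that some hull vertex occurs in at least two (for $d=2$, three) distinct Birch blocks, and (b) the existence of an interchangeable same-color pair once $k\geq d+2$ (naive pigeonhole on the full arrangement of cone walls is far too weak), or, failing that, the vanishing for $k\geq d+2$ of a suitably signed count $\sum_{\mathcal B}\prod_i\operatorname{sgn}\det(\cdot)$ of the partitions.
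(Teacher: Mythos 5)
Your treatment of $d=1$ is correct and matches what the paper leaves as an easy exercise, and your reduction of (iv) to evenness via the base case $k=d+2$ (where $\text{cBP}_k(X)>0$ plus parity forces $\text{cBP}_k(X)\geq 2$) is exactly the paper's route. But the proposal has a genuine gap at its core: property (i) for $d\geq 2$ is never proved, and it is the engine of the whole theorem. Your proposed involution --- swapping two same-colored points $x,x'$ that are ``geometrically interchangeable'' --- is unlikely to work as stated: the interchangeability condition depends on the blocks of each particular partition, so a single pair working uniformly for all partitions need not exist, and any such global pairing argument would have to explain why it fails for $k=d+1$ (the paper notes counterexamples there), i.e.\ it must use $k\geq d+2$ in an essential way that your sketch does not. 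The paper instead proves (i) by a parity-tracking deformation: place $p$ at the origin and $X$ on $S^{d-1}$, move one point $q$ (followed via its antipode $-q$) across the cell decomposition cut out by the rainbow $d$-subsets, and show that each transversal wall-crossing preserves the parity of $\text{cBP}_k(X)$; the base case $k=d+2$ requires a $d$-step reduction of the color classes that terminates in a lemma of Deza et al.\ (with $2(d+1)$ points, each color twice, the number of rainbow simplices containing the origin is even). Nothing in your sketch substitutes for this.

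The second gap is your multiplicity claim $|\mathcal T_v|\geq 2$ (resp.\ $\geq 3$ for $d=2$), which you defer to the geometry of a hull vertex. The paper does not obtain this geometrically; it derives it from (i): given one partition $B_1,\dots,B_k$, apply evenness to the union of $d+2$ of its blocks to produce a second partition of that sub-union, hence a second block through the chosen point $x_1$ distinct from $B_1$, and then runs the induction separately on $X\setminus B_1$ and $X\setminus B'_1$; the factor $3$ for $d=2$, $k\geq 7$ needs a further combinatorial argument over the $\binom{4}{2}$ four-block sub-unions containing $B_1$ and $B_3$. So your identity $\text{cBP}_k(X)=\sum_{T\in\mathcal T_v}\text{cBP}_{k-1}(X\setminus(\{v\}\cup T))$ is the right skeleton, but both inputs it needs --- evenness, and the existence of at least two (or three) blocks through a fixed point --- are precisely the parts you have not supplied, and in the paper the second is a consequence of the first rather than an independent geometric fact.
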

The condition $k\geq d+2$ in (\ref{it-cb-even}) is necessary for $d=2,3,4$ as there are 
counter-examples for $k=d+1$. Computer experiments for dimensions 2, and 3 show that the 
lower bounds (\ref{it-cb-lower-plane}) and (\ref{it-cb-lower}) are tight: 
For $4\leq k \leq 9$ in dimension 2, and for $5\leq k \leq 8$ in dimension~3.

In the following, we construct a planar set $X$ such that ${cBP}_3(X)$ is odd.
In the planar setting, a point configuration can be represented as a colored word of length $3k$ on the 
alphabet $\{+,-\}$: Choose a line through $p$. This line hits at most one point
from $X$, and it divides the plane into two half-spaces. Choose one of
the two half-spaces. Then sweep the line through $p$ over the
chosen half-space counter-clockwise.  The ray hits all points exactly
once, and the sweeping leads to a linear order on the points in $X$.
This determines a colored word of length $3k$ on the alphabet $\{+,-\}$
in the following way: Write for every point of $X$ the letter $+$ when
the line hits a point in the chosen half-space, and $-$ in the
other case. While writing the letters, keep for each letter track of its color.

Every possibility of partitioning a colored word of length $3k$ into $k$ colored subwords of
the form $+-+$, or $-+-$ corresponds one-to-one to a colored Birch partition of $X$.
One can check that the alternating word $+-+-+-+-+$ of length $9$ with a cyclic coloring $0,1,2,0,1,2,0,1,2$ 
corresponds to a colored point configuration with $\text{cBP}_3(X)=3$ being odd. Namely, 
one partition is $\{0,1,2\}, \{3,4,5\}, \{6,7,8\}$, where the letters are numbered from left to right. The other
two are\linebreak $\{0,1,8\},\{2,3,4\},\{5,6,7\}$, and 
$\{0,7,8\},\{1,2,3\},\{4,5,6\}$.\\

Theorem~\ref{thm-num-cbp} implies analogous properties for the number of colored Tverberg partitions.
\begin{thm}\label{thm-num-ctv}Let $d \geq 1$, $r \geq 2$ prime, $N:=(d+1)(r-1)$, and 
$f:\Delta_N\rightarrow \R^d$ affine, where the $N+1$ vertices of $\Delta_N$ 
have $d+2$ different colors, 
and the color classes satisfy: $|C_0|=|C_1|=\ldots=|C_d|=r-1$ and $|C_{d+1}|=1$. Then the number of unordered
colored Tverberg partitions $T(f)$ satisfies the following
four properties:
\begin{enumerate}[\rm (i)]
\item\label{it-ctv-even}$T(f)$ is even for $r\geq 2d+2$.
\item\label{it-ctv-lower-d1}$T(f)\geq \lceil \frac{r-1}{2}\rceil !
\cdot \lfloor \frac{r-1}{2}\rfloor !$
for $d=1$.
\item\label{it-ctv-lower-plane}$T(f)\geq 8\cdot 3^{r-8}$, for $d=2$ and $r\geq 8$.
\item\label{it-ctv-lower}$T(f) \geq 2^{r-2d-1}$, for $d \geq 2$ and $r\geq 2d+2$. 
\end{enumerate}
\end{thm}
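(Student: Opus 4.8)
The plan is to deduce Theorem~\ref{thm-num-ctv} from Theorem~\ref{thm-num-cbp} by showing that, for affine $f$, a colored Tverberg partition is a colored Birch partition together with a bounded number of extra blocks. By a perturbation argument one may assume the $N+1$ image points, viewed as a colored set $P\subset\R^d$, are in general position (for the lower bounds this is immediate, since every colored Tverberg partition of a nearby generic configuration survives in the limit; evenness I would establish directly for $P$ in general position). Let $\{B_1,\dots,B_r\}$ be a colored Tverberg partition with common point $z\in\bigcap_i\mathrm{conv}(B_i)$, and set $s_i=|B_i|$. Since the blocks partition the vertex set, $\sum_i s_i=N+1$, whence $\sum_i(d+1-s_i)=(d+1)r-(N+1)=d$; general position forces each $s_i\le d+1$ and makes the affine hulls of the blocks meet transversally in the single point $z$, which lies in the relative interior of each $\mathrm{conv}(B_i)$. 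Hence at most $d$ blocks have size below $d+1$, so at least $r-d$ have size exactly $d+1$; of these at most one contains the unique point $q$ of color $d+1$, so at least $r-d-1$ are colored Birch blocks for the colors $0,\dots,d$ and the point $z$. This is the Observation of Section~\ref{sec-intro}: if $E$ denotes the set of blocks that are \emph{not} colored Birch blocks, then $|E|\le d+1$, $q\in\bigcup_{B\in E}B$, and the remaining blocks form a colored Birch partition of $X_E:=P\setminus\bigcup_{B\in E}B$ to $z$ into $r-|E|$ blocks.

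This is reversible: appending the blocks of $E$ to an arbitrary colored Birch partition of $X_E$ to $z$ yields a colored Tverberg partition of $P$ with common point $z$, distinct Birch partitions giving distinct Tverberg partitions, and the non-Birch blocks of the result are again exactly $E$. Since each colored Tverberg partition determines its pair $(z,E)$ uniquely, one obtains the decomposition
\[ T(f)=\sum_{(z,E)}\text{cBP}_{r-|E|}(X_E),\]
the sum over the finitely many pairs $(z,E)$ that occur. One technical point here is to check that $X_E\cup\{z\}$ is in general position, so that Theorem~\ref{thm-num-cbp} applies to each summand; this should follow from general position of $P$ and the transversality above, if necessary after a further small perturbation.

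For the lower bounds it then suffices to produce an occurring pair with $|E|\le d$: then $\text{cBP}_{r-d}(X_E)>0$, and with $k:=r-|E|\ge r-d$ one has $k\ge d+2$ in part~(\ref{it-ctv-lower}) (equivalently $r\ge 2d+2$), $k\ge 6$ in part~(\ref{it-ctv-lower-plane}) (equivalently $r\ge 8$), and no constraint for $d=1$ in part~(\ref{it-ctv-lower-d1}), so the decomposition together with the monotonicity in $k$ of the bounds in Theorem~\ref{thm-num-cbp}(\ref{it-cb-lower-d1})--(\ref{it-cb-lower}) gives $T(f)\ge\text{cBP}_{k}(X_E)$, which is at least $\lceil\frac{r-1}{2}\rceil!\lfloor\frac{r-1}{2}\rfloor!$, $8\cdot 3^{r-8}$, and $2^{r-2d-1}$ respectively. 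I expect \emph{producing such a pair} --- that is, forcing $q$ into a block of size $\le d$ rather than settling for the automatic bound $|E|\le d+1$ --- to be the main obstacle, since the naive ``fix the extra blocks, permute the Birch part'' reduction only yields $k=r-d-1$, which is off by the missing factor ($2$, resp.\ $3$, resp.\ a factorial). When $q$ cannot be absorbed into a small block at all --- which genuinely happens, e.g.\ for $d=1$ when all points of $P$ lie on one side of $q$ --- I would instead run a sweeping / intermediate-value argument over the choice of $z$ (in the planar case this is the colored-word model described in Section~\ref{sec-intro}) to locate a pair $(z,E)$ whose single term already meets the stated bound, or a family of pairs whose terms sum to it.

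Finally, evenness, part~(\ref{it-ctv-even}), follows from the same decomposition: if $r\ge 2d+3$ then every occurring pair satisfies $r-|E|\ge r-(d+1)\ge d+2$, so each summand $\text{cBP}_{r-|E|}(X_E)$ is even by Theorem~\ref{thm-num-cbp}(\ref{it-cb-even}) and hence $T(f)$ is even. The borderline value $r=2d+2$ needs a little more: a pair with $|E|=d+1$ contributes $\text{cBP}_{d+1}(X_E)$, which need not be even --- odd such terms genuinely exist, witnessed by the $\text{cBP}_3$ example constructed above --- so one has to show that for $r=2d+2$ the pairs with $|E|=d+1$ either do not occur or cancel in pairs, leaving an even total contribution. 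Settling this borderline case, together with the general-position check on the sets $X_E\cup\{z\}$, completes the proof.
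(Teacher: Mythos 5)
Your overall strategy --- decompose the set of colored Tverberg partitions according to the exceptional blocks $E$ and the Tverberg point $z$, observe that each class is counted by a colored Birch number $\text{cBP}_{r-|E|}(X_E)$, and then invoke Theorem~\ref{thm-num-cbp} --- is exactly the paper's route, and your bookkeeping ($\sum_i(d+1-s_i)=d$, hence at most $d$ deficient blocks and all $s_i\le d+1$ in general position) is correct. The genuine gap is the one you flag yourself and leave open: what to do when the unique point $q$ of color $d+1$ sits in a block of size $d+1$. By throwing that block into $E$ you only get $|E|\le d+1$, hence $k=r-|E|\ge r-d-1$, and all your conclusions come out one step short --- the exponents become $r-2d-2$ and $r-9$ instead of $r-2d-1$ and $r-8$, and evenness fails at the borderline $r=2d+2$ because a class with $k=d+1$ may contribute an odd number (as your own $\text{cBP}_3=3$ example shows). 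The sweeping/intermediate-value repair you sketch for this is not carried out and is not what the paper does.

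The missing idea is a recoloring of $q$. Remove from the partition the at most $d$ deficient blocks; since their total deficiency is exactly $d$, a count of colors on the remaining $(r-|E|)(d+1)$ points shows that either each of the colors $0,\dots,d$ occurs exactly $r-|E|$ times there (when $q$ lies in a deficient block), or exactly one of these colors occurs $r-|E|-1$ times and $q$ is among the remaining points; in the latter case one recolors $q$ with that unique deficient color. After this recoloring the full-size block containing $q$ is a rainbow block in the colors $0,\dots,d$, so $E$ really does consist only of the deficient blocks, $|E|\le d$, and every class is a colored Birch count with $k=r-|E|\ge r-d\ge d+2$. This yields the stated bounds $2^{r-2d-1}$ and $8\cdot 3^{r-8}$ directly from Theorem~\ref{thm-num-cbp} and settles evenness for all $r\ge 2d+2$, borderline case included, with no cancellation argument. (Your remaining technical worries --- perturbing to general position and checking that $X_E\cup\{z\}$ is generic --- are legitimate but minor; the paper glosses over them as well.)
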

It is an easy exercise to show that the lower bound for $d=1$ is optimal. In general, the lower
bounds might not be optimal as we assumed that there is (1) only one colored Tverberg point being (2) the
intersection point of $d$ partition blocks of exactly $d$ points each.
We have not found any (uncolored) example having both properties at the same time. Assuming
that the colored Tverberg point is one of the vertices of $\Delta_N$ leads to a lower bound of $8\cdot 3^{r-7}$
resp.~$2^{r-d-2}$ for sufficiently large $r$. For further remarks, see our discussion based on computations 
in Section~\ref{sec-rem}.

In Section~\ref{sec-proof-cbp} we prove Theorem~\ref{thm-num-cbp}, 
and Theorem~\ref{thm-num-ctv} in Section~\ref{sec-proof-ctv}.
\begin{figure}
\centering
\includegraphics[width=9cm]{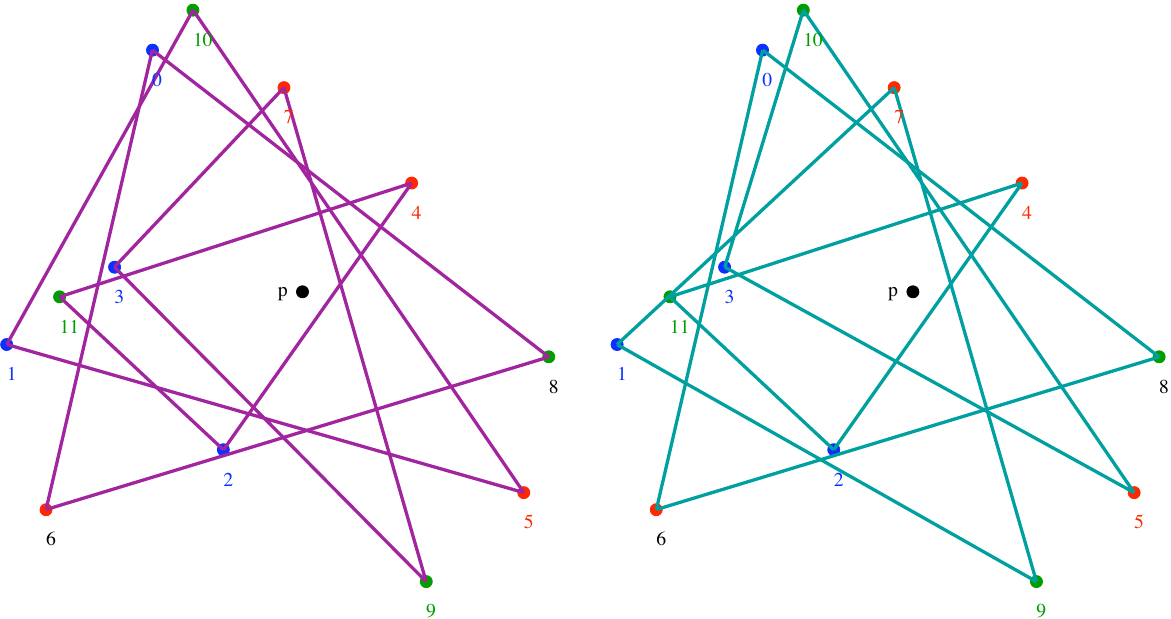}
\caption{A planar example for $k=4$ with $\text{cBP}_k(X)=2$.}
\label{fig-bp}
\end{figure}
\end{section}
\begin{section}{Proof of Theorem~\ref{thm-num-cbp}}\label{sec-proof-cbp}
Property~(\ref{it-cb-lower-d1}) is an easy exercise. We first prove Property~(\ref{it-cb-even}) inductively; here the key part is the base case $k=d+2$.
In a second step, we show that Property~(\ref{it-cb-even}) implies 
Properties~(\ref{it-cb-lower-plane}) and~(\ref{it-cb-lower}).

We will use an approach similar to the uncolored case in~\cite{hell08:_birch}: One of
our points will be moved while all the others remain fixed. During this moving process, we will keep
track of the parity for the number of colored Birch partitions. 

In the following, we assume $d\geq 2$. Let $k\geq 2$, fix $p$ to be the origin $o\in\R^d$, and assume 
without restriction that all $k(d+1)$ 
colored points of $X$ are on the unit 
sphere $S^{d-1}\subset \R^d$. If all points lie in the northern hemisphere of $S^{d-1}$, then
$\text{cBP}_{k}(X)=0$, as the origin is not in the convex hull of $X$. Below we do the following:
We move one colored 
point $q$ while fixing all others. It is sufficient to show that the parity of $\text{cBP}_{k}(X)$ does
not change during this.

Let $q$ be a point of $X$. Instead of looking at $q$, we follow its antipode $-q$ as 
for any $d$-element subset $S\subset X\setminus\{ q\}$, one has:
\begin{eqnarray*}
o\in\text{conv} (S\cup\{q\}) \;\Longleftrightarrow\; -q\in\text{cone}(S).
\end{eqnarray*}

From now on, we restrict ourselves to $d$-element subsets $S\subset X$ such that $S\cup \{q\}$ is rainbow. 
Every $d$-element 
subset $S$ defines a cone in $\R^d$, all these cones decompose the sphere $S^{d-1}\subset \R^d$ into cells.
As long as $-q$ moves inside one of these cells, $\text{cBP}_{k}(X)$ does not change. At some
point, we are forced to move $-q$ from one cell to another.  
At that point $\text{cBP}_{k}(X)$ might change. A boundary hyperplane of a cell is defined through a $(d-1)$-element subset $H\subset S$.

Our moving procedure can be chosen so that our cell decomposition is nice, 
and that $-q$ crosses a boundary hyperplane of the cell in a transversal way. 
Before looking at colored Birch partitions, let's look at the set $\cal A$ of all rainbow $d$-simplices
containing the origin.
If $-q$ crosses a hyperplane defined through a subset $H$, then $\cal A$ might change. Let
$H'=H\cup\{q\}$. For all colored simplices that do not contain $H'$ as a face, nothing changes. For the other
simplices $\Delta$ the following property switches:
\begin{eqnarray}\label{crossing}
o\in\text{conv} (\Delta) \text{ before the crossing.}\;\Longleftrightarrow \; o\not\in\text{conv}(\Delta)\text{ afterwards.} 
\end{eqnarray} 

A colored Birch partition of $X$ consists of $k$ disjoint rainbow $d$-simplices containing the origin. 
If $-q$ crosses a hyperplane defined through $H\subset X$, then some colored Birch partitions vanish, 
and new colored Birch partitions come up. In fact, all Birch partitions, that include a simplex
$\Delta$, $H'\subset\Delta$, which contains the origin before the crossing, vanish. 
The new ones include a simplex
$\Delta$, $H'\subset\Delta$, which contains the origin after the crossing, but only 
if $X\setminus\Delta$ admits a colored Birch partition into $k-1$ partition blocks.

In our proof, we need a special case of Deza et al.~\cite[Theorem~3.5]{deza06:_colour}
which we reprove to make the reader familiar with the argument used below.
\begin{lem}[{\cite[Theorem~3.5]{deza06:_colour}}]\label{lem-key}
For $d\geq 2$, and a given set X of 
$2(d+1)$ colored points in $\R^d$ of $d+1$ different colors such that each color occurs 
exactly twice, the number of colored $d$-simplices containing the origin is even.
\end{lem}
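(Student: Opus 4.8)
The plan is to use a parity/pairing argument on the set of colored $d$-simplices with vertices in $X$ that contain the origin in their interior. With $|C_i|=2$ for every color $0\le i\le d$, write $C_i=\{a_i,b_i\}$. A colored (rainbow) $d$-simplex is then obtained by choosing, for each color $i$, exactly one of the two points $a_i$ or $b_i$; thus there are $2^{d+1}$ colored $d$-simplices in total. I would index each such simplex by a sign vector $\varepsilon\in\{0,1\}^{d+1}$, where $\varepsilon_i=0$ means we picked $a_i$ and $\varepsilon_i=1$ means we picked $b_i$; call the corresponding simplex $\Delta_\varepsilon$. The goal is to show that the number of $\varepsilon$ with $o\in\mathrm{conv}(\Delta_\varepsilon)$ is even.

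The key idea is the "antipodal flip" pairing: pair $\Delta_\varepsilon$ with $\Delta_{\bar\varepsilon}$, where $\bar\varepsilon$ is the complementary sign vector (swap $a_i\leftrightarrow b_i$ in every color simultaneously). This is a fixed-point-free involution on the $2^{d+1}$ colored simplices since $d\ge 2$ forces $d+1\ge 3\ge 1$, so $\varepsilon\ne\bar\varepsilon$ always. The crux is then a local claim: for each complementary pair $\{\Delta_\varepsilon,\Delta_{\bar\varepsilon}\}$, the number of the two that contain the origin is even, i.e. \emph{both} contain $o$ or \emph{neither} does. If this claim holds, summing over the $2^d$ pairs shows the total count of origin-containing colored simplices is a sum of even numbers, hence even. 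I would prove the local claim by a perturbation/continuity argument mirroring the one used above for the crossing relation~(\ref{crossing}): move the point configuration generically (keeping $X\cup\{o\}$ in general position) and track how $o\in\mathrm{conv}(\Delta_\varepsilon)$ changes. Each combinatorial change happens when $o$ crosses an affine hyperplane spanned by a $d$-element rainbow subset $H$; such an $H$ uses $d$ of the $d+1$ colors, so there are exactly two colored $d$-simplices extending $H$ to a full rainbow simplex, corresponding to the two choices in the missing color. These two simplices are \emph{not} a complementary pair in general, but they differ in exactly one coordinate; when $o$ crosses the hyperplane through $H$, the membership switches for \emph{both} of them simultaneously (as in~(\ref{crossing})), so the parity of the count within any set closed under the global antipodal swap is preserved. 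Then I would degenerate to a symmetric base configuration where $b_i=-a_i$ for all $i$ (so $\Delta_{\bar\varepsilon}=-\Delta_\varepsilon$); there, by central symmetry, $o\in\mathrm{conv}(\Delta_\varepsilon)$ if and only if $o\in\mathrm{conv}(-\Delta_\varepsilon)=\mathrm{conv}(\Delta_{\bar\varepsilon})$, so every complementary pair contributes $0$ or $2$, giving an even total; pushing this evenness back along the perturbation path using the parity-preservation just established finishes the proof.

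The main obstacle is making the perturbation argument rigorous: one must choose a path from the given configuration to the symmetric base configuration along which (a) $X\cup\{o\}$ stays in general position except at finitely many transversal crossing events, (b) no two hyperplane crossings coincide, and (c) at each crossing the switching of membership~(\ref{crossing}) indeed happens for exactly the pair of rainbow simplices extending the relevant $(d)$-set $H$, with no other colored simplex affected. The bookkeeping that the quantity "number of origin-containing colored simplices" changes by $0$ or $\pm 2$ at every event — and in particular that the contribution of each complementary pair changes by an even amount — is where the care is needed; everything else (counting $2^{d+1}$ simplices, the base case computation, the fixed-point-freeness of the involution) is routine. An alternative, cleaner route would be an Euler-characteristic or Radon-partition counting identity, but since the excerpt explicitly says the lemma is reproved to familiarize the reader with the crossing technique used later, I would deliberately present the perturbation proof.
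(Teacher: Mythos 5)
Your global bookkeeping is sound, but the ``key idea'' you build the proof around --- the local claim that for each complementary pair $\{\Delta_\varepsilon,\Delta_{\bar\varepsilon}\}$ either both simplices contain the origin or neither does --- is false, and the argument you sketch for it does not close. Counterexample for $d=2$: take $a_0=(1,0)$, $a_1=(-1,1)$, $a_2=(-1,-1)$, so that $o\in\mathrm{conv}\{a_0,a_1,a_2\}$, and put $b_0,b_1,b_2$ in general position near $(100,0)$. Then $\{b_0,a_1,a_2\}$ contains the origin (at $x=0$ it spans roughly $-0.99<y<0.99$), while its complement $\{a_0,b_1,b_2\}$ is a thin sliver with all vertices at $x\geq 1$ and does not; that complementary pair contributes exactly $1$. (The total is still even, as the lemma asserts: exactly $\{a_0,a_1,a_2\}$ and $\{b_0,a_1,a_2\}$ contain $o$.) The precise point of failure is your sentence that ``the parity of the count within any set closed under the global antipodal swap is preserved'': the two simplices that flip at a crossing event are the two rainbow extensions of a common $d$-element set $H$ and differ in a \emph{single} coordinate, so a given complementary pair typically contains exactly one of them and its internal count changes by $\pm1$. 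The only nonempty subset of $\{0,1\}^{d+1}$ closed under all single-coordinate flips is the whole cube, so the crossing argument preserves only the \emph{total} parity; you cannot transport the pairwise evenness of the centrally symmetric configuration back along the path.

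The part of your proposal that is correct already proves the lemma, and it is essentially the paper's argument: at each transversal crossing exactly the two colored $d$-simplices extending the relevant rainbow $d$-set $H$ (by the two points of the one missing color) switch the property~(\ref{crossing}) simultaneously, so the number of origin-containing colored simplices changes by $0$ or $\pm2$; combining this with any base configuration in which the total is visibly even (your centrally symmetric one works, as does merging a point with its same-colored twin, or sweeping the configuration into an open half-space where the count is $0$) finishes the proof. So the fix is simply to discard the complementary-pair involution and track only the total count. (The paper phrases the motion as moving one point $q$ of $X$ --- equivalently its antipode $-q$ --- rather than moving $o$, and the two affected simplices are those containing $H\cup\{q\}$; this is the same computation.)
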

\bprf Let $X=\{0,1,2,\ldots,2d+1\}$ such that the points $2i,2i+1$ are of color $i$, for all
$0\leq i\leq d$. Without restriction, we choose $q=0$, and the boundary hyperplane of our cell spanned 
by $H=\{2,4,\ldots,2(d-1)\}$. If $-q$ crosses the hyperplane through $H$, then exactly two colored $d$-simplices 
$\{0,2,4,\ldots,2(d-1),2d\}$, and $\{0,2,4,\ldots,2(d-1),2d+1\}$ are affected as observed 
in~(\ref{crossing}). 
In any case, the parity for the number of colored $d$-simplices containing the origin does not change.
\eprf

\begin{proof}[Proof of Theorem~\ref{thm-num-cbp}, Property~(\ref{it-cb-even})] This follows -- as in the uncolored case -- via induction 
from its base case $k=d+2$. Let $k\geq d+3$, and $x$ a point of color 0. Let $B_1,B_2,\ldots,B_l$
be all rainbow $d$-simplices containing the origin, and using the point $x$. For every $i\in [l]$ the
set $X\setminus B_i$ has an even number of colored Birch partitions by assumption. 
Adding up all these even numbers leads to $\text{cBP}_k(X)$.

Let $k=d+2$, and $X$ be our set of $(d+1)(d+2)$ colored points. We will repeat the following step $d$
times, and then we will finally apply Lemma~\ref{lem-key} to complete our proof.\\

{\bf Step 1:} Let $q$ be a point of $X$, and the boundary hyperplane -- that is crossed transversally -- be
spanned by a rainbow set $H_1$. Assume without restriction that in $H_1\cup\{q\}$ the $d$ colors 
$0,1,2,\ldots,d-1,\widehat{d}$ show up, where $\widehat{d}$ means ``omit $d$". For every $s\in C_d$,
the colored $d$-simplex $H_1\cup\{q,s\}$ will change its property of containing the 
origin -- as observed in~(\ref{crossing}) -- so that some colored Birch partitions vanish, and new ones
come up. Again, new ones come up if the rest admits a colored Birch partition into $d+1$ blocks.
To prove the evenness of $\text{cBP}_{d+2}(X)$ it is sufficient to show
that 
\begin{eqnarray}\label{hilfslemma}
\text{$\text{cBP}_{d+1}(X_1)=\sum_{s\in C_{d}^1} \text{cBP}_{d+1}(X_1\setminus \{s\})$ is even.}
\end{eqnarray}
Here, the set $X_1=X\setminus H_1$ consists of $(d+1)^2+1$ points: The $d$ new color classes $C_0$ to
$C_{d-1}$ are of size $d+1$, and color class $C_d$ of size $d+2$. Therefore, the expression 
$\text{cBP}_{d+1}(X_1)$ stands for the sum over the $d+2$ 
possibilities to drop one of the $d+2$ points of color $d$ from 
$X_1$. Define the new color classes
$C_i^1$ to be $C_i$ minus the point of color $i$ in $H_1\cup\{q_1\}$, for $0\leq i\leq d$.

In Step 1, we have reduced the
partition parameter $k$ from $d+2$ to $d+1$ by $1$, and the number of points from $(d+1)(d+2)$ 
to $(d+1)^2+1$ by~$d$. In repeating this step $d$ times, we will end up with $k=d+2-d=2$, and
$(d+1)(d+2)-d^2=3d+2$ many points. Finally, the color class $C^d_0$ will be of size
$2$, and $C^d_1$ to $C^d_d$ of size $3$.\\

{\bf General Step $i$, $2\leq i\leq d$:} Assume that we have reduced our problem to 
showing that 
\[ \text{cBP}_{d+3-i} (X_{i-1}) =
\sum_{s_1\in C^{i-1}_d,s_2\in C^{i-1}_{d-1},\ldots,s_{i-1}\in C^{i-1}_{d-i+2}}
\text{cBP}_{d+3-i}\left( 
X_{i-1}\setminus \{ s_1,s_2,\ldots,s_{i-1} \}
\right)
\]
is even, where $X_{i-1}$ has color classes $C^{i-1}_0,C^{i-1}_1,\ldots, C^{i-1}_d$ such 
that $|C^{i-1}_j|=d+4-i$ for $j\geq d-i+2$, and $|C^{i-1}_j|=d+3-i$ otherwise.

Let $q_i$ be a point of $X_{i-1}$, and the boundary hyperplane -- that 
is crossed transversally -- be spanned by a subset $H_{i}$ of $X_{i-1}$ such that 
$G_i=H_i\cup\{q_i\}$ is rainbow. We distinguish two cases: 
\begin{enumerate}[{Case (i},1)]
\item\label{fall1} $G_i\cap C^{i-1}_{d-i+2}=\emptyset$.
\item\label{fall2} $G_i\cap C^{i-1}_{d-i+2}\not=\emptyset$.
\end{enumerate}

In Case~(i,\ref{fall1}), we show that a pairing for the colored Birch partitions shows up:
For every point $r\in C^{i-1}_{d-i+2}$, the property of containing the origin changes for the colored 
$d$-simplices $G_i\cup \{r\}$ while $q_i$ crosses the hyperplane through $H_i$, due to~(\ref{crossing}). 
A $d$-simplex $G_i\cup \{r\}$ contributes to the number of colored Birch partitions if the rest
admits a Birch partition into $d+2-i$ blocks. The latter property is independent of the current moving
process. In fact, $G_i\cup \{r\}$ contributes a summand
\[\text{cBP}_{d+3-i}\left( 
X_{i-1}\setminus \{ s_1,s_2,\ldots s_{i-2},s_{i-1} \}
\right),
\]
where $s_1\in C^{i-1}_d,s_2\in C^{i-1}_{d-1},\ldots,s_{i-1}\in C^{i-1}_{d-i+2}$, and 
$r\not=s_{i-1}$, in a positive, or negative way. This contribution can be concretized to be 
\[\text{cBP}_{d+2-i}\left( 
X_{i-1}\setminus (G_i\cup\{ s_1,s_2,\ldots,s_{i-1},r \})
\right).
\]

But the same 
contribution shows up for the colored $d$-simplex $G_i\cup \{s_{i-1}\}$ in the summand 
\[\text{cBP}_{d+3-i}\left( 
X_{i-1}\setminus \{ s_1,s_2,\ldots,s_{i-2},r \}
\right),
\]
again in a positive, or negative way. 
In any case, the parity of $\text{cBP}_{d+3-i}(X_{i-1})$ remains unchanged.

In Case~(i,\ref{fall2}), let $r$ be the unique point in $G_i\cap C^{i-1}_{d-i+2}$. Then
all summands
\[\text{cBP}_{d+3-i}\left( 
X_{i-1}\setminus \{ s_1,s_2,\ldots,s_{i-2},r \}
\right)
\] 
do not change, as any colored $d$-simplex not containing $G_i$ is not affected.

We fix a point $s\in C^{i-1}_{d-i+2}$, such that $s\not= r$. Assume
without restriction that in $H_i\cup\{q_i\}$ the $d$ colors $0,1,\ldots,\widehat{d-i+1},\ldots,d$ show up 
such that $C_{d-i+1}^{i-1}\cap G_i=\emptyset$. For every point $t\in C_{d-i+1}^{i-1}$, 
the property of containing the origin changes for the colored $d$-simplex
$G_i\cup \{t\}$, when $q_i$ crosses the hyperplane through $H_i$. 
Every simplex $G_i\cup \{t\}$ contributes
\[\text{cBP}_{d+2-i}\left( X_{i-1}\setminus (G_i\cup \{s,t\})\right)\]
to $\text{cBP}_{d+3-i}\left( X_{i-1}\setminus \{s\}\right)$ in a positive, or negative way. 
Note that the expression above is a sum.

Hence, it is sufficient for Case~(i,\ref{fall2}) to show that all these contributions sum up to an even 
number:
\[\text{cBP}_{d+2-i}(X_i)=
\sum_{s_1\in C^{i}_d,s_2\in C^{i}_{d-1},\ldots,s_{i}\in C^{i}_{d-i+1}}\
\text{cBP}_{d+2-i}\left( 
X_{i}\setminus \{ s_1,s_2,\ldots,s_{i} \}
\right),
\]
where $X_i=X_{i-1}\setminus G_i$. $X_i$ has color classes $C^i_j$, where $C^i_j$ is obtained from $C^{i-1}_j$
by deleting the point of color $j$ in $G_i$ for all $0\leq j\leq d$. Note that $|C^i_j|=d+3-i$,
for $j\geq d-i+1$; otherwise $|C^2_j|=d+2-i$.

Case~(i,\ref{fall2}) of Step $i$ reduces our original problem in the following way: The parameter
$k=d+3-i$ is reduced by $1$ to $k=d-2+i$, and the number of points is reduced by $d$.\\

{\bf After step $d$:} The outcome of this procedure is a colored set $X_d$ with the color class 
$C^d_0$ of size $2$, and color classes $C^d_1$ to $C^d_d$ of size $3$. It remains to
prove that
\[ \text{cBP}_2 (X_d) =
\sum_{s_1\in C^d_d,s_2\in C^d_{d-1},\ldots,s_d\in C^d_1}\
\text{cBP}_2\left( 
X_d\setminus \{ s_1,s_2,\ldots,s_d \}
\right)
\text{ is even.}\]

For this, let $q_{d+1}$ be a point of $X_d$, and the boundary hyperplane -- that 
is crossed transversally -- be spanned by a subset $H_{d+1}$ of $X_d$ such that 
$G_{d+1}=H_{d+1}\cup\{q_{d+1}\}$ is rainbow. We distinguish to cases 
\begin{enumerate}[{Case (d+1,}1)]
\item\label{final-fall1} $G_{d+1}\cap C^d_0=\emptyset$.
\item\label{final-fall2} $G_{d+1}\cap C^d_0\not=\emptyset$.
\end{enumerate}
In Case~(d+1,\ref{final-fall1}), a pairing shows up as in the previous steps. Analogously,
Case~(d+1,\ref{final-fall2}) reduces to the statement of Lemma~\ref{lem-key}.
\end{proof}

\begin{proof}[Proof of Property~(\ref{it-cb-even}) implies Properties~(\ref{it-cb-lower-plane}) and~(\ref{it-cb-lower})]
For $d\geq 2$, let us first prove 
\begin{eqnarray}
\text{cBP}_k(X)>0\,\Longrightarrow\,\text{cBP}_k(X)\geq 2^{k-d-1}\text{ for }d\geq 2 
\text{ and }k\geq d+2,
\end{eqnarray} 
via an induction on $k\geq d+2$. This settles Property~(\ref{it-cb-lower}).

Property~(\ref{it-cb-even}) implies the base case $k=d+2$: 
\[\text{cBP}_k(X)>0\,\,\Longrightarrow\,\,\text{cBP}_k(X)\geq 2=2^{k-d-1}.\]

Let now $k\geq d+3$, and be $\text{cBP}_k(X)>0$.
Then there is a colored Birch partition $B_1,B_2,\ldots,B_k$ of $X$. For $1\leq i\leq k$,
let $x_i$ be the point of color $0$ such that $x_i \in B_i$.
Note that
for any non-empty subset $I$ of the index set $[k]$, the set $\bigcup_{i\in I}B_i$
has again a colored Birch partition.

Using the base case for $\bigcup_{i\in [4]}B_i$, we obtain a
second colored Birch partition $B'_1,B'_2,B'_3,B'_4$ such that $x_i\in B'_i$ for all $i\in[4]$. 
Without loss of generality, we can assume $B_1\not=B'_1$.
Applying the assumption to the set $X\setminus B_1$, we obtain at least $2^{k-d-2}$ colored 
Birch partitions of $X$ starting with $B_1$. Finally, applying the assumption to the 
set $X\setminus B'_1$, we obtain again at least $2^{k-d-2}$ Birch partitions of $X$ starting
with $B'_1$. The construction of the sets $B_1$ and $B'_1$ leads to the factor of $2$.

To prove Property~(\ref{it-cb-lower-plane}), we show in the two subsequent paragraphs that a third set $B''_1$
can be constructed for $d=2$, and $k\geq 7$ so that all three sets a) contain a fixed point $x$, and b) are 
pairwise distinct. Therefore, the factor $3$ shows up in the lower bound for $d=2$ and $k\geq 7$.

For $x_1\in B_1$, the set $B'_1$ can be constructed as above. Now $B'_1$ contains a point $y\not= x_1$ 
that is not in $B_1$, and without loss of generality we can assume $y\in B_2$. Therefore $B_2\not=B'_2$.
The set $\{4,5,6,7\}$ has ${4 \choose 2}=6$ subsets $I$ with two elements. For every subset $I=\{ i_1,i_2\}$, 
we apply the base case to $B_1\cup B_3 \cup B_{i_1}\cup B_{i_2}$ so that we obtain each time a new colored 
Birch partition $B^I_1,B^I_3,B^I_{i_1},B^I_{i_2}$, such that $x_1\in B^I_1$, $x_3\in B^I_3$,
and $x_j\in B^I_j$ for both $j\in I$. If $B_1\not=B^I_1$ for one subset $I$, then $B'_1$ and $B^I_1$ 
are distinct by construction. Choosing $B''_1=B^I_1$ completes our proof.

If $B_1=B^I_1$ for all subsets $I$, then we proceed as follows:
For every $I$, there is a pair of $(i,j)$ from $I\cup\{3\}$ so that $B_i\not=F^I_i$ and $B_j\not=B^I_j$.
A pair of the form $(3,j)$ is the outcome of at most three index sets, and a pair of the form $(i,j)$ 
of at most two index sets, where $i,j\in \{4,5,6,7\}$. As we have in total $6$ pairs of indices, one index
$j\in \{3,4,5,6,7\}$ shows up in at least two pairs for two subsets $I_1,I_2$. 
Choosing the sets $B_j$, $B^{I_1}_j$, and $B^{I_2}_j$ completes our proof.
\end{proof}
\begin{rem}
It is easy to show that evenness - the key property of our proof - fails in the more general case of continuous maps.
If we count preimages instead of partitions, it should be possible to obtain a result similar to Theorem~\ref{thm-num-cbp}.
\end{rem}
\end{section}
\begin{section}{Proof of Theorem~\ref{thm-num-ctv}}\label{sec-proof-ctv}
\bprf The existence of at least one colored Tverberg partitions follows from
Theorem~\ref{thm-ctv-bmz11}. In the worst case, the partition consists of $d$ partition blocks of exactly
$d$ points each, intersecting in a single point, plus $r-d$ many $(d+1)$-element sets 
$B_1, B_2,\ldots , B_{r-d}$ that all contain the intersection point in their convex hulls; here we need 
$r \geq d$. The first $d+1$ colors show up exactly $r-d$ times if the unique point of color $d+1$ does 
not end up in one of the $B_i$'s. In that case, this single point is recolored with the unique color showing 
up $r-d-1$ times. In both cases, the each property of $T(f)$ follows directly from the corresponding property
for colored Birch partitions for $k=r-d$.
\eprf
\begin{rem}
\begin{enumerate}
\item In Theorem~\ref{thm-num-ctv}, the assumption $r$ prime is needed for the existence of at least one partition. 
Alternatively, $r$ arbitrary and $T(f)>0$ are sufficient conditions. 
\item Any lower bound $\ell$ on the number of colored Tverberg points for a given map $f$ 
improves our lower bounds for the number of colored Tverberg partitions by the factor of $\ell$.
\end{enumerate}
\end{rem}
\end{section}
\begin{section}{Remarks}\label{sec-rem}
Let us conclude this paper with a discussion on lower bounds for the
number of colored Tverberg partitions in the setting of Theorem~\ref{thm-ctv-bmz11}. The table
below shows minimal numbers based on four different approaches for $d=2$, and $r$ up to $8$.

The point configuration due to Sierksma~\cite[Sect.~6.6]{matou03:_using_borsuk_ulam} given by
$r-1$ points clustered around each of the vertices of a standard $d$-simplex in $\R^d$ plus one point
in its center seemed to be a good candidate for a minimal configuration, even in the colored case.
The Sierksma configuration is extremal in two ways: It has one
only Tverberg point, but this Tverberg point comes with the maximal number of Birch partitions.

The coloring of the Sierksma configuration which seems to lead to the smallest number of colored
Tverberg partitions is obtained as follows:
The point in the center is of color $d+1$. The $r-1$ points of every vertex are colored so 
that each of the colors $0,1,\ldots,d $ shows up  $(r-1)/(d+1)$ times, or 
$\lceil (r-1)/(d+1)\rceil$ resp.~$\lfloor (r-1)/(d+1)\rfloor$ if $r$ is not a multiple of $d+1$. 
In that case, the remaining $(r-1)$ modulo $(d+1)$ points of vertex $i$, where $0\leq i\leq d$, 
are colored in a cyclic way with colors $i, i+1, \ldots $ (modulo $d$). 
The number of colored Tverberg partitions for this colored configuration can be calculated 
for every dimension $d$ via recursion formulas. These numbers are shown in the table below.

Linda Kleist~\cite{note_linda11} who wrote her bachelor thesis under the supervision of Ziegler studied colored 
point configurations for $r\leq 6$, and $d=2$: The vertices of a regular 3(r-1)-gon plus its center point. 
Minimizing over all colorings, this construction led to larger numbers. 
Her results are shown below.

While experimenting with randomly placed colored points in the plane, we obtained minimal numbers
shown in the table below. Looking at $100000$ examples for $r=5$ has led to five colored sets 
with 10 colored Tverberg partitions. These minimal examples have several Tverberg points: 
One of the points of $X$, and intersection points of two segments. These examples kept us from coming up
with a conjecture for the number of colored Tverberg partitions based on the Sierksma configuration,
and the coloring from above.

The last column of our table shows the lower bound of Theorem~\ref{thm-num-ctv}. The lower bound is shown
in brackets for $r$ non-prime. In that case, the additional assumption $T(f)>0$ is needed. 
\begin{center}
\begin{tabular}{|c|c|c|c|c|}
\hline
  \multirow{3}{*}{$r$} & Minimum for  & Minimum for polygonal & Minimum for  & Lower bound\\ 
    & colored Sierksma  & configurations  & random  		& of \\
	& configurations	& due to Kleist	  & configurations	&  Theorem~\ref{thm-num-ctv}\\ \hline
  	2 & 1 & 1 & 1 & 1\\\hline
	3 & 1 & 1 & 1 & 1\\\hline
	4 & 2 & 2 & 2 &(1)\\\hline
	5 & 12 & 16 & 10 & 1\\\hline
	6 & 80 & 80 & 80 & (2)\\\hline
	7 & 640 & - & 864 & 4\\\hline
	8 & 9216 & - &  $>10000$& (8)\\\hline
\end{tabular}
\end{center}

In conclusion, the table suggests 1) that finding minimal colored configurations is not easy, 2) that
looking at random configurations fails for $r>6$, and 3) that our lower bound is not tight.\\

{\bf Acknowledgements.} The author is grateful to Pavle Blagojevi\'c for critical comments,
and to G\"unter M.~Ziegler for critical remarks which led to an improvement of the paper.
\end{section}


Email-address: \href{mailto:stephan@hell-wie-dunkel.de}{stephan@hell-wie-dunkel.de}

\end{document}